\documentclass[11pt]{amsart}

%

\usepackage[pdftex]{graphicx}
\usepackage{color}

\usepackage[margin=1in]{geometry}

\usepackage{amsfonts}
\usepackage{amsmath}
\usepackage{amssymb}
\usepackage{amsthm}
\usepackage{subfig}
\usepackage{float}
\usepackage{MnSymbol}
\usepackage{extarrows}

\usepackage{paralist}
\usepackage[colorlinks,cite color=blue,pagebackref=true,pdftex]{hyperref}

\usepackage[T1]{fontenc}





\renewcommand\emptyset{\varnothing}


\newtheorem{thm}{Theorem}
\newtheorem{cor}[thm]{Corollary}
\newtheorem{lem}[thm]{Lemma}
\newtheorem{prop}[thm]{Proposition}

\theoremstyle{definition}

\newtheorem{example}[thm]{Example}
\newtheorem{rem}[thm]{Remark}

\title{Divisors on Projective Hibi Varieties}

\author{Tobias Friedl}
\address{Fachbereich Mathematik und Informatik, %
Freie Universit\"at Berlin, %
Germany}
\email{tfriedl@zedat.fu-berlin.de}

\keywords{Posets, toric varieties, Hibi rings, divisor class groups, Picard groups}
\subjclass[2010]{06A11, 14M25}

\date{\today}
\thanks{The author has been supported by European Research Council
under the European Union's Seventh Framework Programme (FP7/2007-2013) / ERC
grant agreement n$^\mathrm{o}$ 247029.}
\parindent=0pt
\parskip=5pt

\begin{document}

\begin{abstract}
We compute the divisor class group and the Picard group of projective varieties with Hibi rings as homogeneous coordinate rings. These varieties are precisely the toric varieties associated to order polytopes. We use tools from the theory of toric varieties to get a description of the two groups which only depends on combinatorial properties of the underlying poset.
\end{abstract}

\maketitle

\section{introduction}

\newcommand{\Po}{\mathcal{P}}
\newcommand{\IL}{\mathcal{I}(\mathcal{P})}

Let $(\mathcal{P},\le)$ be a finite partially ordered set (poset). A subset $I\subseteq\Po$ is called an \emph{order ideal} if it is down-closed, i.e. $p\in I$ and $q\le p$ implies $q\in I$. Denote by $\IL$ the set of all order ideals of $\Po$. The poset $(\IL,\subseteq)$ is a distributive lattice with join $I\vee J = I\cup J$ and meet $I\wedge J = I\cap J$ for $I,J\in\IL$. 

\emph{Hibi rings} \cite{Hi87} are graded algebras with straightening laws associated to finite posets. More precisely, for a poset $\mathcal{P}=\{p_1,\ldots,p_n\}$ the Hibi ring $\mathbb{C}[\mathcal{P}]$ is the subalgebra of $\mathbb{C}[x_1,...,x_n,t]$ generated by the set of monomials $\{t\prod_{p_i\in I}{x_i}:I\in\mathcal{I}(\mathcal{P})\}$. Hibi rings are normal Cohen-Macaulay domains and we have $\mathbb{C}[\mathcal{P}]\cong\mathbb{C}[y_I:I\in\mathcal{I}(\mathcal{P})]/\mathfrak{I}_{\mathcal{I}(\mathcal{P})}$, where $\mathfrak{I}_{\mathcal{I}(\mathcal{P})}$ is the ideal generated by the so-called \emph{Hibi relations} $y_Iy_J-y_{I\wedge J}y_{I\vee J}$ for all $I,J\in\mathcal{I}(\mathcal{P})$ (see \cite{Hi87}). 


Since the Hibi relations are homogeneous there is a natural grading on $\mathbb{C}[\mathcal{P}]$ coming from the standard grading on $\mathbb{C}[y_I:I\in\mathcal{I}(\mathcal{P})]$. In the following our central object of sudy are the projective varieties $X_\mathcal{P}$ with the graded ring $\mathbb{C}[\mathcal{P}]$ as its homogeneous coordinate ring, which we will call $\emph{(projective) Hibi varieties}$. Hibi varieties appear for example as flat degenerations of Grassmannians and flag varieties (\cite{MS05},\cite{EH12}). Moreover, they generalize several well-studied classes of varieties, such as certain determinantal and ladder determinantal varieties (\cite{BC03},\cite{Co95}).

Hibi varieties are toric varieties, hence geometric questions can be reduced to discrete-geometric questions about polytopes and fans. In the case of Hibi varieties one can hope to go even one step further and describe the geometry of $X_\mathcal{P}$ in terms of the combinatorics of $\mathcal{P}$.
A first step was done by Wagner in \cite{Wa96}, where the orbits of the torus action and the singular locus of $X_\mathcal{P}$ are described in terms of properties of $\mathcal{P}$.

 In the present paper we compute the divisor class group and the Picard group of Hibi varieties. In Section 2 we describe the polytope of $X_\mathcal{P}$. This was already used without proof in \cite{Wa96}. In Section 3 we use general results on toric varieties to compute the divisor class group of $X_\mathcal{P}$. Finally, in Section 4 we use the description of the divisor class group to compute the Picard group of $X_\mathcal{P}$.

\section{Hibi Varieties and Order Polytopes}

\newcommand{\Pop}{\mathcal{P}^{op}}

Let $\mathcal{P}$ be a finite poset. The projective variety $X_\mathcal{P}=\textnormal{Proj}(\mathbb{C}[\mathcal{P}])$ is called the \emph{(projective) Hibi variety} associated to $\mathcal{P}$.
Hibi varieties appear in various contexts and generalize some well-studied classes of varieties, as the following examples show.

\begin{example}
Let $\mathcal{P}_n$ denote the chain consisting of $n$ elements. The Hibi variety $X_{\mathcal{P}_n}$ is the complex projective space $\mathbb{P}^n$. More generally, if $\mathcal{P}$ is the disjoint union of chains $\mathcal{P}_{n_1},\ldots,\mathcal{P}_{n_l}$ the associated Hibi variety $X_\mathcal{P}$ is the Segre embedding of $\mathbb{P}^{n_1}\times\cdots\times\mathbb{P}^{n_l}$.
\end{example}

\begin{example}
For $1\le d\le n$ there exists a flat degeneration taking the \emph{Grassmannian} $G_{d,n}$ of $d$-dimensional subspaces of an $n$-dimensional complex vector space to the Hibi variety $X_{\mathcal{P}_d\times\mathcal{P}_{n-d}}$. For details see \cite{EH12}, \cite{Fr13} or \cite{St96}. More generally, also \emph{flag varieties} degenerate to Hibi varieties (see \cite{MS05}).
\end{example}

\begin{example}
\emph{Projective determinantal varieties} are determined by the vanishing of all minors of a fixed size of a matrix of indeterminates. In the case of $2$-minors of an $(n\times m)$-matrix $A$ the determinantal variety is the Hibi variety associated to $\mathcal{P}_{n-1}\cupdot\mathcal{P}_{m-1}$. Indeed, the lattice $\mathcal{I}(\mathcal{P}_{n-1}\cupdot\mathcal{P}_{m-1})$ is isomorphic to $\mathcal{P}_n\times\mathcal{P}_m$ and Hibi relations in $\mathcal{P}_n\times\mathcal{P}_m$ correspond precisely to the $2$-minors of $A$.
\end{example}

\begin{example}
\emph{Ladder determiantal varieties} are a generalisation of determinantal varieties, where instead of matrices so-called \emph{ladders} of indeterminates are considered (see e.g. \cite{Co95}). In the case of $2$-minors, these are again Hibi varieties.
\end{example}

In the following we will describe the polytope associated to the toric variety $X_\mathcal{P}$. For a poset $\Po$ a subset $J\subseteq \Po$ is called an \emph{order filter} if it is up-closed, i.e. if $b\ge a$ and $a\in J$ implies $b\in J$. Note that $J$ is an order filter if and only if its complement $\Po\backslash J$ is an order ideal. The set $\mathcal{J}(\Po)$ of all order filters is a distributive lattice with union and intersection as join and meet operation, respectively. We have $\mathcal{J}(\Po)\cong\mathcal{I}(\Pop)$, where $\Pop$ is the \emph{opposite poset} of $\Po$, the poset with the same underlying set as $\Po$ but with the order reversed. 

For a subset $S\subset\Po$ we denote by $\mathbf{a}_S\in\mathbb{R}^\mathcal{P}$ the characteristic vector of $S$, i.e. $a_p=1$ if $p\in S$ and $a_p=0$ otherwise. The convex hull of the set $\{\mathbf{a}_J:J\in\mathcal{J}(\mathcal{P})\}$ is called the \emph{order polytope} of $\mathcal{P}$ and denoted by $\mathcal{O}(\mathcal{P})$. It can be shown that $\mathcal{O}(\mathcal{P})$ consists of all order-preserving functions $f:\mathcal{P}\to[0,1]\subseteq\mathbb{R}$ (see \cite{St86}). 
There is the following close connection between Hibi varieties and order polytopes.



\begin{prop}
The Hibi variety $X_\mathcal{P}$ is isomorphic to the projective toric variety associated to the order polytope $\mathcal{O}(\Pop)$.
\end{prop}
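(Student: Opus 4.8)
The plan is to unwind the definition of the Hibi ring as an affine semigroup algebra, identify the relevant polytope as $\mathcal{O}(\mathcal{P}^{op})$, and then conclude by taking $\operatorname{Proj}$. Write $\mathcal{P}=\{p_1,\dots,p_n\}$ and identify $\mathbb{R}^{\mathcal{P}}$ with $\mathbb{R}^n$. By definition $\mathbb{C}[\mathcal{P}]$ is the subalgebra of $\mathbb{C}[x_1,\dots,x_n,t]$ generated by the monomials $t\,x^{\mathbf{a}_I}$ with $I\in\mathcal{I}(\mathcal{P})$, so $\mathbb{C}[\mathcal{P}]$ is the affine semigroup algebra $\mathbb{C}[S]$ of the semigroup $S\subseteq\mathbb{Z}^n\oplus\mathbb{Z}$ generated by $A:=\{(\mathbf{a}_I,1):I\in\mathcal{I}(\mathcal{P})\}$, and the standard grading on $\mathbb{C}[\mathcal{P}]$ corresponds to grading $\mathbb{C}[S]$ by the last coordinate.

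First I would identify the polytope. Since the order filters of $\mathcal{P}^{op}$ are precisely the order ideals of $\mathcal{P}$, we get $\mathcal{O}(\mathcal{P}^{op})=\operatorname{conv}\{\mathbf{a}_I:I\in\mathcal{I}(\mathcal{P})\}$. As $\mathcal{O}(\mathcal{P}^{op})\subseteq[0,1]^n$ (its points are $[0,1]$-valued order-preserving functions), its lattice points are $\{0,1\}$-vectors; and a $\{0,1\}$-vector $\mathbf{a}_T$ lies in $\mathcal{O}(\mathcal{P}^{op})$ iff the indicator of $T$ is order-preserving on $\mathcal{P}^{op}$, i.e.\ iff $T$ is an order filter of $\mathcal{P}^{op}$, i.e.\ iff $T\in\mathcal{I}(\mathcal{P})$. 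Hence $\mathcal{O}(\mathcal{P}^{op})\cap\mathbb{Z}^n=\{\mathbf{a}_I:I\in\mathcal{I}(\mathcal{P})\}$, so $A=\bigl(\mathcal{O}(\mathcal{P}^{op})\cap\mathbb{Z}^n\bigr)\times\{1\}$ and $\operatorname{Cone}(A)$ equals the cone $\sigma$ over $\mathcal{O}(\mathcal{P}^{op})\times\{1\}$.

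Next I would recall the standard description of the projective toric variety $X_Q$ attached to a full-dimensional lattice polytope $Q$: it is $\operatorname{Proj}$ of the semigroup algebra $\mathbb{C}[\sigma_Q\cap\mathbb{Z}^{n+1}]$, where $\sigma_Q$ is the cone over $Q\times\{1\}$ graded by height. (Order polytopes are full-dimensional: rescaling a linear extension yields a strictly monotone function, which is an interior point.) It therefore suffices to prove $S=\sigma\cap\mathbb{Z}^{n+1}$, i.e.\ that $S$ is saturated. The inclusion $\subseteq$ is clear. Conversely, $(0,1)\in S$ because $\emptyset\in\mathcal{I}(\mathcal{P})$, and for each $p\in\mathcal{P}$ the principal ideal $\mathord\downarrow p$ and the ideal $\mathord\downarrow p\setminus\{p\}$ differ by the single element $p$, so $(e_p,0)=(\mathbf{a}_{\downarrow p},1)-(\mathbf{a}_{\downarrow p\setminus\{p\}},1)\in\langle S\rangle$; thus $S$ generates the lattice $\mathbb{Z}^{n+1}$ and $\operatorname{Cone}(S)=\sigma$. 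An affine semigroup with these properties is saturated if and only if its algebra is normal, and $\mathbb{C}[S]=\mathbb{C}[\mathcal{P}]$ is normal (as recalled in the introduction; equivalently, $\mathcal{O}(\mathcal{P}^{op})$ admits a unimodular triangulation). Applying $\operatorname{Proj}$ to the resulting graded isomorphism $\mathbb{C}[\mathcal{P}]\cong\mathbb{C}[\sigma\cap\mathbb{Z}^{n+1}]$ gives $X_{\mathcal{P}}\cong X_{\mathcal{O}(\mathcal{P}^{op})}$.

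The step I expect to be the main obstacle is precisely this last one: showing that the algebra generated by the height-one monomials indexed by the lattice points of $\mathcal{O}(\mathcal{P}^{op})$ equals the full cone algebra, which is equivalent to saturation of $S$ and genuinely uses normality of the Hibi ring (equivalently, of the order polytope). A secondary point requiring care is fixing the precise convention for ``the projective toric variety associated to a polytope'' — the $\operatorname{Proj}$-of-the-cone-algebra description versus the closure of the monomial embedding — and matching up the gradings; once the identification $S=\sigma\cap\mathbb{Z}^{n+1}$ is in hand these conventions coincide and the grading bookkeeping is routine.
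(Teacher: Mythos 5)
Your proposal is correct and follows essentially the same route as the paper: both identify $\mathbb{C}[\mathcal{P}]$ with the affine semigroup algebra generated by the homogenized lattice points $(\mathbf{a}_I,1)$ of $\mathcal{O}(\mathcal{P}^{op})$ and invoke normality of the Hibi ring (equivalently, the unimodular triangulation of the order polytope) to match this with the toric variety of the polytope. Your write-up is more detailed than the paper's sketch — in particular the explicit verification that the lattice points of $\mathcal{O}(\mathcal{P}^{op})$ are exactly the $\mathbf{a}_I$ and that $S$ generates $\mathbb{Z}^{n+1}$ — but there is no substantive difference in method.
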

\begin{proof}
We will only sketch the proof, using results and notation from \cite{CLS11}. As in Chapter 1 and 2 of \cite{CLS11} for a finite set of lattice points $\mathcal{A}=\{\mathbf{a}_1,\ldots,\mathbf{a}_m\}\subseteq\mathbb{Z}^k$ we denote by $Y_\mathcal{A}$ the associated affine toric variety defined to be the Zariski closure of the image of the map
\begin{equation*}
\Phi_\mathcal{A}:(\mathbb{C}^*)^k\to \mathbb{C}^m, 
\mathbf{t}\mapsto (\mathbf{t}^{\mathbf{a}_1},...,\mathbf{t}^{\mathbf{a}_m}).
\end{equation*}
Moreover, let $X_\mathcal{A}$ be the Zariski closure of the image of $\pi\circ\Phi_\mathcal{A}$, where $\pi:(\mathbb{C}^*)^m\to\mathbb{P}^{m-1}$ denotes the canonical projection. 

The order polytope $\mathcal{O}(\Pop)$ is normal, since it has a unimodular triangulation (\cite{St86}). Using that the integral points of $\mathcal{O}(\Pop)$ are precisely its vertices, it now follows that the associated toric variety is isomorphic to $X_\mathcal{A}$, where $\mathcal{A}=\{\mathbf{a}_I:I\in\mathcal{I}(\mathcal{P})\}\subseteq\mathbb{Z}^\mathcal{P}$ and $\mathbf{a}_I$ denotes the characteristic vector of the ideal $I$. For the set $\mathcal{A}'=\{(1,\mathbf{a}_I):I\in\mathcal{I}(\mathcal{P})\}\subseteq\mathbb{Z}^{|\mathcal{P}|+1}$ of lattice points of the homogenization of $\mathcal{O}(\Pop)$ we clearly have $X_\mathcal{A}=X_{\mathcal{A}'}$.
On the other hand, $\mathcal{A}'$ forms a set of generators of the affine semigroup of the Hibi ring $\mathbb{C}[\mathcal{P}]$. Hence if follows from Proposition 2.1.4 in \cite{CLS11} and the quotient description of $\mathbb{C}[\mathcal{P}]$ that $X_{\mathcal{A}'}\cong X_\mathcal{P}$.
\end{proof}

Since $\mathcal{O}(\Pop)$ is full-dimensional and Hibi rings are normal (see \cite{Hi87}), we have the following immediate corollary.

\begin{cor}
$X_\mathcal P$ is a projectively normal toric variety of dimension $|\mathcal{P}|$.
\end{cor}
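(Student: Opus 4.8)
The plan is to deduce all three assertions from the Proposition together with standard facts about projective toric varieties, exactly as the lead-in sentence suggests. That $X_\mathcal{P}$ is toric is immediate: the Proposition exhibits $X_\mathcal{P}$ as the projective toric variety $X_{\mathcal{A}}$ attached to the lattice point set $\mathcal{A}=\{\mathbf{a}_I:I\in\mathcal{I}(\Po)\}$ of the order polytope $\mathcal{O}(\Pop)$.

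For the dimension I would first check that $\mathcal{O}(\Pop)$ is full-dimensional in $\R^{\Po}$. By Stanley's description, $\mathcal{O}(\Pop)$ consists of all order-preserving maps $f:\Pop\to[0,1]$, a subset of $[0,1]^{\Po}$ since $\Po$ and $\Pop$ share the same ground set; any strictly order-preserving map, for instance one sending a linear extension of $\Po$ to equally spaced values in $(0,1)$, has a full-dimensional neighbourhood of order-preserving maps inside $[0,1]^{\Po}$. Hence $\dim\mathcal{O}(\Pop)=|\Po|$, equivalently the vertices $\{\mathbf{a}_I:I\in\mathcal{I}(\Po)\}$ affinely span $\R^{\Po}$. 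Since the dimension of the projective toric variety associated to a lattice polytope equals the dimension of that polytope, we obtain $\dim X_\mathcal{P}=|\Po|$.

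For projective normality I would use that the homogeneous coordinate ring of $X_\mathcal{P}=\textnormal{Proj}(\mathbb{C}[\Po])$ is the Hibi ring $\mathbb{C}[\Po]$, which Hibi proved to be a normal domain \cite{Hi87}, and invoke the standard equivalence: a closed subvariety of $\mathbb{P}^{m-1}$ is projectively normal precisely when its homogeneous coordinate ring is integrally closed. Alternatively, one can argue purely on the polytope side: the proof of the Proposition already records that $\mathcal{O}(\Pop)$ is a normal lattice polytope because it admits a unimodular triangulation, so the homogenized generating set $\mathcal{A}'=\{(1,\mathbf{a}_I):I\in\mathcal{I}(\Po)\}$ spans a normal affine semigroup, whence its semigroup ring — the homogeneous coordinate ring of $X_{\mathcal{A}}$ — is normal (see \cite{CLS11}).

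I do not expect any real obstacle here; the only points requiring a moment's care are the equivalence between projective normality of $X\subseteq\mathbb{P}^{m-1}$ and integral closedness of its homogeneous coordinate ring, and the observation that \emph{full-dimensional} indeed pins $\dim X_\mathcal{P}$ to $|\Po|$ rather than something smaller — both of which are standard once the Proposition is in hand.
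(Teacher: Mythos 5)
Your proposal is correct and follows exactly the paper's own (one-line) justification: the paper deduces the corollary from the fact that $\mathcal{O}(\Pop)$ is full-dimensional (giving the dimension) and that Hibi rings are normal (giving projective normality). Your fleshing-out of both points — the strictly order-preserving interior point for full-dimensionality, and the equivalence of projective normality with integral closedness of the homogeneous coordinate ring — is accurate and standard.
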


\section{Divisor Class Group}

A relation $p<q$ with $p,q\in\Po$ is called a \emph{covering relation} if there is no $r\in\Po$ with $p<r<q$. We write $\mathcal{C}(\mathcal P)$ for the set of covering relations in $\mathcal P$. The \emph{Hasse diagram} of $\Po$ is the directed graph on the elements of $\Po$ with an edge from $p$ to $q$ if and only if $p<q\in\mathcal{C}(\mathcal P)$.

For a finite poset $\mathcal{P}$ denote by $\hat{\mathcal{P}}$ the poset obtained from $\mathcal{P}$ by attaching a minimal element $\hat 0$ and a maximal element $\hat{1}$.  For a covering relation $p<q\in\mathcal{C}(\hat{\mathcal P})$ define $\mathbf{u}_{p<q}\in\mathbb{Z}^{\mathcal{P}}$ by
\begin{equation}\label{FacetNormals}
\mathbf{u}_{p<q}=
\begin{cases}
\mathbf{e}_p & \textnormal{ if }q=\hat 1\\
-\mathbf{e}_q & \textnormal{ if }p=\hat 0\\
\mathbf{e}_p-\mathbf{e}_q & \textnormal{ otherwise},
\end{cases}
\end{equation}
where $\mathbf{e}_p$ is the standard basis vector corresponding to an element $p\in\mathcal{P}$. 
Note that these vectors are precisely the facet normals of the order polytope $\mathcal{O}(\Pop)$ (see \cite{St86}). For each such facet normal we can associate a torus-invariant divisor $D_{p<q}$ on $X_\mathcal{P}$. Moreover, the set $\{D_{p<q}:p<q\in\mathcal{C}(\hat{\mathcal{P}})\}$ of all such divisors forms a basis of $\textnormal{Div}_T(X_\mathcal{P})$, the group of torus-invariant divisors on $X_\mathcal{P}$ (see \cite{CLS11}, Chapter 4).

\begin{rem}
The facet of $\mathcal{O}(\Pop)$ with normal vector $\mathbf{u}_{p<q}$ is linear equivalent to the order polytope $\mathcal{O}((\tilde\Po)^{op})$, where $\tilde\Po$ is the poset obtained by first contracting the edge $p<q$ in the Hasse diagram of $\hat\Po$ and then removing $\hat 0$ and $\hat 1$ (see \cite{St86}). Therefore it follows from \cite[Prop. 3.2.9]{CLS11} that $D_{p<q}$ is isomorphic to the Hibi variety $X_{\tilde\Po}$. More explicitly, we have $D_{p<q}=X_\mathcal{P}\cap V(x_I:|(I\cup\{\hat{0}\})\cap\{p,q\}|=1)\subseteq\mathbb{P}^{|\mathcal{I}(\mathcal{P})|-1}$.
\end{rem}


Let $\textnormal{Cl}(X_\mathcal{P})$ denote the divisor class group of $X_\mathcal{P}$. The main result of this section is the following.

\begin{thm}\label{Cl}
Let $\mathcal{P}$ be a finite poset with $n$ elements and $X_\mathcal{P}$ the associated projective Hibi variety. Then we have
\begin{equation*}
\textnormal{Cl}(X_\mathcal{P})\cong\mathbb{Z}^{|\mathcal{C}(\hat{\mathcal{P}})|-n}.
\end{equation*}
\end{thm}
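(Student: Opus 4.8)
The plan is to use the standard exact sequence for the divisor class group of a projective toric variety arising from a full-dimensional lattice polytope. Since $X_\mathcal{P}$ is the toric variety of the normal polytope $\mathcal{O}(\Pop)$, which is full-dimensional in $\mathbb{R}^\mathcal{P}\cong\mathbb{R}^n$, Theorem 4.1.3 of \cite{CLS11} gives a short exact sequence
\begin{equation*}
0\longrightarrow M\longrightarrow \textnormal{Div}_T(X_\mathcal{P})\longrightarrow \textnormal{Cl}(X_\mathcal{P})\longrightarrow 0,
\end{equation*}
where $M=\mathbb{Z}^n$ is the character lattice and the first map sends $m\in M$ to the principal divisor $\sum_{p<q\in\mathcal{C}(\hat{\mathcal P})}\langle m,\mathbf{u}_{p<q}\rangle D_{p<q}$. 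Here I use that the rays of the normal fan of $\mathcal{O}(\Pop)$ are exactly the $\mathbf{u}_{p<q}$ for $p<q\in\mathcal{C}(\hat{\mathcal P})$, as recorded in the excerpt, so $\textnormal{Div}_T(X_\mathcal{P})\cong\mathbb{Z}^{|\mathcal{C}(\hat{\mathcal P})|}$ with this distinguished basis. For this sequence to be exact on the left one needs the rays to span $\mathbb{R}^n$ (equivalently, the polytope is full-dimensional and has no torus factor), which holds here.

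Given the exact sequence, it suffices to show that the map $M\to\textnormal{Div}_T(X_\mathcal{P})$ is injective with free cokernel, for then $\textnormal{Cl}(X_\mathcal{P})\cong\mathbb{Z}^{|\mathcal{C}(\hat{\mathcal P})|-n}$. Injectivity is immediate from full-dimensionality. The substantive point is that the cokernel is torsion-free, equivalently that the integer matrix $A$ whose rows are the vectors $\mathbf{u}_{p<q}\in\mathbb{Z}^n$ (as $p<q$ ranges over $\mathcal{C}(\hat{\mathcal P})$) has the property that its $n\times n$ minors have greatest common divisor $1$; in fact it is enough to exhibit a single $n\times n$ submatrix with determinant $\pm1$, i.e.\ a unimodular square submatrix of full rank.

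To produce such a submatrix I would choose, for each element $p\in\mathcal{P}$, one covering relation in $\hat{\mathcal P}$ incident to $p$ in a consistent way, for instance by fixing a linear extension $p_1,\ldots,p_n$ of $\mathcal{P}$ and picking for $p_i$ the unique covering relation $p_j<p_i\in\mathcal{C}(\hat{\mathcal P})$ with $j$ as small as possible (allowing $p_j=\hat 0$). Writing the corresponding rows $\mathbf{u}_{p_j<p_i}$ in the order $i=1,\ldots,n$, each such row is either $\mathbf{e}_{p_i}$ (when $p_j=\hat0$) or $\mathbf{e}_{p_j}-\mathbf{e}_{p_i}$ with $j<i$; ordering the columns as $p_1,\ldots,p_n$ this matrix is lower-triangular with $-1$'s on the diagonal (or $1$'s, up to sign conventions), hence has determinant $\pm1$. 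This uses that in a linear extension every non-minimal $p_i$ covers — or rather is covered from below by — some $p_j$ with $j<i$, together with the fact that every minimal element of $\mathcal{P}$ covers $\hat 0$ in $\hat{\mathcal P}$, so the construction is always possible. Alternatively one can argue more structurally: the matrix $A$ is (up to signs) the incidence matrix of the Hasse diagram of $\hat{\mathcal P}$ with the $\hat0,\hat1$ columns deleted, and such incidence matrices of acyclic digraphs are well known to be totally unimodular, which gives the gcd-of-minors statement directly.

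The main obstacle is precisely verifying that the cokernel is torsion-free rather than merely computing its rank: the rank count $|\mathcal{C}(\hat{\mathcal P})|-n$ follows instantly from the exact sequence and injectivity, but ruling out torsion requires the unimodularity input above. I expect the cleanest writeup to invoke total unimodularity of the (signed) incidence matrix of the acyclic Hasse diagram of $\hat{\mathcal P}$ — after deleting the two columns for $\hat0$ and $\hat1$ — since this both gives a full-rank unimodular minor and is a standard fact; the linear-extension triangularity argument is the self-contained fallback if one prefers not to cite it.
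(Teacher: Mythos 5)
Your proof is correct, and it reaches the conclusion by a mechanism different from the paper's, although the combinatorial heart is the same. Both arguments start from the exact sequence $0\to\mathbb{Z}^{\mathcal{P}}\xrightarrow{\phi}\textnormal{Div}_T(X_\mathcal{P})\to\textnormal{Cl}(X_\mathcal{P})\to 0$, and both hinge on choosing, for each $p\in\mathcal{P}$, one covering relation from below in $\hat{\mathcal{P}}$: in your language these index the rows of a triangular $n\times n$ submatrix with determinant $\pm1$; in the paper's language they form a spanning arborescence $T$ of the Hasse diagram of $\mathcal{P}\cup\{\hat 0\}$. From there you conclude by Smith normal form: a unimodular maximal minor (or total unimodularity of the column-deleted incidence matrix, combined with full column rank) forces all invariant factors to be $1$, so the cokernel is free of rank $|\mathcal{C}(\hat{\mathcal{P}})|-n$. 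The paper instead builds an explicit surjection $\psi:\textnormal{Div}_T(X_\mathcal{P})\to\mathbb{Z}^{|\mathcal{C}(\hat{\mathcal{P}})|-n}$ sending $D_{p<q}$ to a basis vector for edges off $T$, extends it over $T$ by forcing $\textnormal{im}(\phi)\subseteq\textnormal{ker}(\psi)$ (peeling off leaves of $T$), and then checks $\textnormal{ker}(\psi)\subseteq\textnormal{im}(\phi)$ by hand. Your route is shorter and leans on a standard fact; the paper's explicitness pays off by delivering Corollary~\ref{gens} directly, namely that the classes $[D_{p<q}]$ for $p<q\in\mathcal{C}(\hat{\mathcal{P}})\setminus T$ form a basis of $\textnormal{Cl}(X_\mathcal{P})$ --- a statement your unimodularity argument also yields (the invertibility over $\mathbb{Z}$ of the $T$-indexed block gives the splitting $\mathbb{Z}^{\mathcal{C}(\hat{\mathcal{P}})}=\textnormal{im}(\phi)\oplus\mathbb{Z}^{\mathcal{C}(\hat{\mathcal{P}})\setminus T}$), but which you would need to spell out. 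Two minor points: the row attached to $\hat 0<p$ is $-\mathbf{e}_p$ rather than $\mathbf{e}_p$, which of course does not affect the determinant up to sign; and acyclicity is not needed for total unimodularity of a directed incidence matrix --- what you do need, and have from full-dimensionality of $\mathcal{O}(\mathcal{P}^{op})$, is that some $n\times n$ minor is nonzero.
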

\begin{proof}
 We have the well-known exact sequence (see e.g. \cite[Thm. 4.1.3]{CLS11})
\begin{equation*}	
0\longrightarrow\mathbb{Z}^\mathcal{P}\xlongrightarrow{\phi}\textnormal{Div}_T(X_\mathcal{P})\longrightarrow\textnormal{Cl}(X_\mathcal{P})\longrightarrow 0
\end{equation*}
where the second map sends a divisor $D$ to its divisor class $[D]$ and $\phi$ is defined by
\begin{equation*}
\phi(\mathbf{m})=\sum\limits_{p<q\in\mathcal{C}(\hat{\mathcal{P}})}{\langle \mathbf{m},\mathbf{u}_{p<q}\rangle D_{p<q}}.
\end{equation*}
More explicitly, we have
\begin{equation}\label{phi}
\phi(\mathbf{e}_p)=\sum\limits_{p<q\in\mathcal{C}(\hat{\mathcal{P}})}{D_{p<q}}-\sum\limits_{r<p\in\mathcal{C}(\hat{\mathcal{P}})}{D_{r<p}}.
\end{equation}
To prove the theorem we will define a map $\psi:\textnormal{Div}_T(X_\mathcal{P})\to\mathbb{Z}^{|\mathcal{C}(\hat{\mathcal{P}})|-n}$ such that the sequence
\begin{equation*}
0\longrightarrow\mathbb{Z}^{|\mathcal{P}|}\xlongrightarrow{\phi}\textnormal{Div}_T(X_\mathcal{P})
\xlongrightarrow{\psi}\mathbb{Z}^{|\mathcal{C}(\hat{\mathcal{P}})|-n}\longrightarrow 0
\end{equation*}
is exact. From this it follows that $\textnormal{Cl}(X_{\mathcal{P}})\cong\mathbb{Z}^{|\mathcal{C}(\hat{\mathcal{P}})|-n}$.\\
To define $\mathcal{\psi}$ we do the following. For every $p\in\mathcal P$ we choose an element $r_p\in\mathcal{P}\cup\{\hat 0\}$ such that $r_p<p$ is a covering relation. Let $T$ be the connected subgraph of the Hasse diagram of $\mathcal{P}\cup\{\hat 0\}$ whose edges are the covering relations $r_p<p$ for all $p\in\mathcal P$. Since $T$ has $n$ edges we can define a basis of $\mathbb{Z}^{|\mathcal{C}(\hat{\mathcal{P}})|-n}$ of the form $\{\mathbf{e}_{p<q}:p<q\in\mathcal{C}(\hat{\mathcal{P}})\backslash T\}$. Now define $\psi(D_{p<q})=\mathbf{e}_{p<q}$ for $p<q\in\mathcal{C}(\hat{\mathcal{P}})\backslash T$. We want to define the image of all other divisors in a way such that $\textnormal{im}(\phi)\subseteq \textnormal{ker}(\psi)$. From \eqref{phi} we get that for $p<q\in\mathcal{C}(\hat{\mathcal{P}})$ we must have
\begin{equation}\label{psi}
\psi(D_{p<q})=\sum\limits_{q<r\in\mathcal{C}(\hat{\mathcal{P}})}{\psi(D_{q<r})}-
\sum\limits_{p'<q\in\mathcal{C}(\hat{\mathcal{P}}):p'\neq p}{\psi(D_{p'<q})}.
\end{equation}
If $q$ is a leaf of $T$ equation \eqref{psi} uniquely defines $\psi(D_{p<q})$. But in fact, as we see by inductively removing leaves, the condition in \eqref{psi} already determines the value of $\psi$ on all edges of $T$.\\
It remains to show that $\textnormal{ker}(\psi)\subseteq\textnormal{im}(\phi)$. Let $D=\sum\limits_{p<q\in\mathcal{C}(\hat{\mathcal{P}})}{\alpha_{p<q}D_{p<q}}$ be a divisor in $\textnormal{ker}(\psi)$. We claim that it suffices to find $\mathbf{m}\in\mathbb{Z}^{\mathcal{P}}$ such that for $D'=D+\phi(\mathbf{m})=\sum\limits_{p<q\in\mathcal{C}(\hat{\mathcal{P}})}{\alpha'_{p<q}D_{p<q}}$ we have $\alpha'_{p<q}=0$ whenever $p<q\in T$. Indeed, by the first part of the proof, $D'$ must lie in $\textnormal{ker}(\psi)$. But since $\alpha'_{p<q}=0$ for all $p<q\in T$ this implies $D'=0$ and therefore $D=-\phi(\mathbf{m})\in\textnormal{im}(\phi)$. 

Any such $\mathbf{m}$ has to satisfy
\begin{equation*}
0=\alpha'_{r_p<p}=
\begin{cases}
\alpha_{r_p<p}-m_p & \textnormal{ if }r_p=\hat 0\textnormal{ and}\\
\alpha_{r_p<p}+m_{r_p}-m_p & \textnormal{ otherwise}.
\end{cases}
\end{equation*}
Hence we define $\mathbf{m}=(m_p)_{p\in\mathcal P}$ inductively by 
\begin{equation*}
m_p=
\begin{cases}
\alpha_{\hat 0<p} & \textnormal{ for }p \textnormal{ minimal element of }\mathcal{P}\textnormal{ and}\\
\alpha_{r_p<p}+m_{r_p} & \textnormal{ otherwise}.
\end{cases}
\end{equation*}
It is easy to see that this $\mathbf{m}$ has the desired properties.
\end{proof}

From the proof of Theorem \ref{Cl} we immediately get the following description of generators of $\textnormal{Cl}(X_\mathcal{P})$.

\begin{cor}\label{gens}
Let $T$ be an arborescence in the Hasse diagram of $\Po\cup\{\hat 0\}$, i.e. a subgraph which for every $p\in\Po$ contains a unique directed path from $\hat 0$ to $p$. Then the divisor class group $\textnormal{Cl}(X_\mathcal{P})$ is the free abelian group generated by the divisor classes $\{[D_{p<q}]:p<q\in\mathcal{C}(\hat{\mathcal{P}})\backslash T\}$. 
\end{cor}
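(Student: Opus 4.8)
The strategy is to observe that the arborescence $T$ appearing in the statement is precisely the type of spanning tree used in the proof of Theorem~\ref{Cl}, and then to read the asserted basis directly off the exact sequence constructed there.

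First I would verify the purely combinatorial claim that a subgraph $T$ of the Hasse diagram of $\Po\cup\{\hat 0\}$ is an arborescence rooted at $\hat 0$ (in the sense of the corollary) if and only if $T=\{r_p<p:p\in\Po\}$ for some choice of elements $r_p\in\Po\cup\{\hat 0\}$ with $r_p<p\in\mathcal{C}(\hat{\mathcal P})$. Indeed, given such a family $(r_p)_{p\in\Po}$, iterating the assignment $p\mapsto r_p$ produces a strictly decreasing chain in $\hat{\mathcal P}$, which must terminate at $\hat 0$; hence $T$ is connected, has $n$ edges on $n+1$ vertices, and supplies a unique directed path from $\hat 0$ to each $p$. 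Conversely, if $T$ has this path property, then every edge of $T$ is the final edge of the unique directed path from $\hat 0$ to its head, so the map sending an edge to its head is a bijection from the edge set of $T$ onto $\Po$; letting $r_p$ be the tail of the edge with head $p$ exhibits $T$ in the required form. In particular $T$ has exactly $n$ edges, matching the count in Theorem~\ref{Cl}.

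Now fix such a $T$ and the associated family $(r_p)_{p\in\Po}$. Running the construction in the proof of Theorem~\ref{Cl} with this choice yields the homomorphism $\psi\colon\textnormal{Div}_T(X_\mathcal{P})\to\mathbb{Z}^{|\mathcal{C}(\hat{\mathcal P})|-n}$ with $\psi(D_{p<q})=\mathbf{e}_{p<q}$ for $p<q\in\mathcal{C}(\hat{\mathcal P})\setminus T$, fitting into the exact sequence $0\to\mathbb{Z}^{|\Po|}\xrightarrow{\phi}\textnormal{Div}_T(X_\mathcal{P})\xrightarrow{\psi}\mathbb{Z}^{|\mathcal{C}(\hat{\mathcal P})|-n}\to 0$, where $\phi$ is the very same map occurring in the class-group sequence $0\to\mathbb{Z}^{\Po}\xrightarrow{\phi}\textnormal{Div}_T(X_\mathcal{P})\to\textnormal{Cl}(X_\mathcal{P})\to 0$. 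Comparing the two sequences gives $\textnormal{Cl}(X_\mathcal{P})=\textnormal{Div}_T(X_\mathcal{P})/\textnormal{im}(\phi)=\textnormal{Div}_T(X_\mathcal{P})/\textnormal{ker}(\psi)$, so $\psi$ descends to an isomorphism $\overline\psi\colon\textnormal{Cl}(X_\mathcal{P})\xrightarrow{\sim}\mathbb{Z}^{|\mathcal{C}(\hat{\mathcal P})|-n}$ with $\overline\psi([D])=\psi(D)$.

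Finally I would conclude: under $\overline\psi$ each class $[D_{p<q}]$ with $p<q\in\mathcal{C}(\hat{\mathcal P})\setminus T$ is sent to the standard basis vector $\mathbf{e}_{p<q}$, and since $\overline\psi$ is an isomorphism carrying these classes to a $\mathbb{Z}$-basis of the target, they form a $\mathbb{Z}$-basis of $\textnormal{Cl}(X_\mathcal{P})$, which is exactly the assertion. The only step carrying genuine content beyond Theorem~\ref{Cl} is the first paragraph's identification of arborescences rooted at $\hat 0$ with the spanning trees $\{r_p<p\}$; everything after that is a direct transcription of the argument already in hand, so I do not anticipate a real obstacle.
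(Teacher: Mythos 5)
Your proposal is correct and follows the paper's intended argument exactly: the corollary is read off from the exact sequence $0\to\mathbb{Z}^{\Po}\xrightarrow{\phi}\textnormal{Div}_T(X_\mathcal{P})\xrightarrow{\psi}\mathbb{Z}^{|\mathcal{C}(\hat{\mathcal{P}})|-n}\to 0$ built in the proof of Theorem~\ref{Cl}, with the induced isomorphism sending $[D_{p<q}]$ to $\mathbf{e}_{p<q}$ for $p<q\in\mathcal{C}(\hat{\mathcal{P}})\setminus T$. Your opening paragraph identifying arborescences rooted at $\hat 0$ with the spanning trees $\{r_p<p:p\in\Po\}$ is a worthwhile detail that the paper leaves implicit, and your argument for it is sound.
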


\begin{rem}
The above proof is similar to the one in \cite{HHN92}, where the divisor class group of \emph{affine} Hibi varietes is computed.
\end{rem}

\section{Picard Group}

Let $\textnormal{Pic}(X_\mathcal{P})$ denote the Picard group of $X_{\Po}$. The main result of this section is the following.

\begin{thm}\label{Pic}
We have $\textnormal{Pic}(X_\mathcal{P})\cong\mathbb{Z}^l$ where $l$ denotes the number of connected components of the Hasse diagram of $\mathcal{P}$.
\end{thm}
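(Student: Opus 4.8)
The plan is to identify $\textnormal{Pic}(X_\Po)$ with the group of torus-invariant Cartier divisors modulo principal divisors and to compute its rank. Since $\textnormal{Pic}(X_\Po)$ is a subgroup of the free group $\textnormal{Cl}(X_\Po)\cong\mathbb{Z}^{|\mathcal{C}(\hat{\Po})|-n}$ from Theorem~\ref{Cl}, it is itself free, so it suffices to compute its rank. Writing $\textnormal{CDiv}_T(X_\Po)$ for the group of torus-invariant Cartier divisors, the exact sequence used in the proof of Theorem~\ref{Cl} restricts to the exact sequence
\begin{equation*}
0\longrightarrow\mathbb{Z}^\Po\xlongrightarrow{\ \phi\ }\textnormal{CDiv}_T(X_\Po)\longrightarrow\textnormal{Pic}(X_\Po)\longrightarrow 0,
\end{equation*}
so I would be done once I show that $\textnormal{CDiv}_T(X_\Po)$ has rank $n+l$.

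The first step is the Cartier criterion for toric varieties (\cite[Thm.~4.2.8]{CLS11}): a divisor $D=\sum_{p<q}a_{p<q}D_{p<q}$ is Cartier if and only if for every vertex $\mathbf{a}_I$ of $\mathcal{O}(\Pop)$, i.e. for every order ideal $I\in\IL$, there is $\mathbf{m}_I\in\mathbb{Z}^\Po$ realising the coefficients along the facets through $\mathbf{a}_I$; with the convention $m_{\hat 0}=m_{\hat 1}=0$ this means $m_p-m_q=a_{p<q}$ for all covering relations $p<q\in\mathcal{C}(\hat{\Po})$ whose facet contains $\mathbf{a}_I$. A direct computation against the facet inequalities of $\mathcal{O}(\Pop)$ (equivalently, using the description of $D_{p<q}$ in the Remark above) shows that the facet of $\mathbf{u}_{p<q}$ contains $\mathbf{a}_I$ precisely when $p$ and $q$ lie on the same side of the partition $\hat{\Po}=\hat I\sqcup(\hat{\Po}\setminus\hat I)$, where $\hat I:=I\cup\{\hat 0\}$. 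Thus the Cartier condition at $\mathbf{a}_I$ is the solvability of the system $m_p-m_q=a_{p<q}$ over all edges $p<q$ of the Hasse diagram of $\hat{\Po}$ that do not cross the cut $\hat I$, with $\hat 0$ and $\hat 1$ both fixed to $0$. This is a potential problem on a graph, hence solvable exactly when $\sum\pm a_{p<q}$ vanishes around every cycle of the graph obtained from the Hasse diagram of $\hat{\Po}$ by deleting the edges crossing $\hat I$ and identifying $\hat 0$ with $\hat 1$.

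The combinatorial core of the argument is to determine which cycle relations occur as $I$ ranges over all order ideals. On the one hand, a path made of non-crossing edges cannot leave $\hat I$, so a cycle of the Hasse diagram of $\hat{\Po}$ passing through both $\hat 0$ and $\hat 1$ is non-crossing for no $I$ and contributes no relation. On the other hand, a cycle avoiding $\hat 1$ is non-crossing for the order ideal of $\Po$ generated by the elements of the cycle, and a cycle avoiding $\hat 0$ is non-crossing for the complement of the order filter of $\Po$ generated by those elements; so every such cycle does contribute its relation. Consequently $\textnormal{CDiv}_T(X_\Po)$ consists exactly of those tuples $(a_{p<q})$ for which there exist functions $c^-\colon\Po\cup\{\hat 0\}\to\mathbb{Z}$ and $c^+\colon\Po\cup\{\hat 1\}\to\mathbb{Z}$ with $c^-_p-c^-_q=a_{p<q}$ along the covering relations of $\Po\cup\{\hat 0\}$ and $c^+_p-c^+_q=a_{p<q}$ along those of $\Po\cup\{\hat 1\}$.

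Finally I would count ranks by passing to $\mathbb{R}$ (the defining relations are rational, so $\textnormal{CDiv}_T(X_\Po)\otimes\mathbb{R}$ is their real solution set). Every connected component of the Hasse diagram of $\Po$ contains a minimal and a maximal element of $\Po$, so the Hasse diagrams of $\Po\cup\{\hat 0\}$ and of $\Po\cup\{\hat 1\}$ are connected; hence, after normalising $c^-_{\hat 0}=c^+_{\hat 1}=0$, both $c^-$ and $c^+$ are uniquely determined by $(a_{p<q})$. Since $c^-$ and $c^+$ both satisfy $c_p-c_q=a_{p<q}$ on the edges of the Hasse diagram of $\Po$, on that diagram they differ by a function which is constant on each of the $l$ connected components; conversely, a choice of $c^+\in\mathbb{R}^\Po$ together with one constant per component produces an admissible tuple $(a_{p<q})$, and this correspondence is a linear isomorphism. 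Therefore $\textnormal{CDiv}_T(X_\Po)$ has rank $n+l$, whence $\textnormal{Pic}(X_\Po)$ has rank $l$, and being free it is isomorphic to $\mathbb{Z}^l$. I expect the cycle identification of the third step to be the main obstacle; the remaining steps are routine.
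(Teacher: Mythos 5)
Your argument is correct, and it reaches the theorem by a genuinely different route than the paper, even though both hinge on the same key lemma (the vertex-by-vertex Cartier criterion, Lemma \ref{LocPrinc}) and, in effect, on the same two decisive ideals: the conditions you extract from cycles avoiding $\hat 1$, resp.\ $\hat 0$, are precisely those imposed by $I=\mathcal{P}$ and $I=\emptyset$, and your observation that the non-crossing graph of any $I$ is a wedge, at the identified vertex $\hat 0\sim\hat 1$, of an induced subgraph of $\hat I$ and one of its complement is exactly what makes every other ideal redundant (the identified vertex is a cut vertex since $\hat 0\in\hat I$ and $\hat 1\notin\hat I$ always, so the identification creates no new cycle conditions --- this is the crux of your third step and it does go through). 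The difference lies in the bookkeeping: the paper works inside $\textnormal{Cl}(X_\mathcal{P})$ with the normal form of Corollary \ref{gens}, shows that a locally principal class must be a $\mathbb{Z}$-combination of the $l$ classes $[D_C]$, $D_C=\sum_{p\in M\cap C}D_{p<\hat 1}$, and then verifies that each such divisor is Cartier; you instead compute the rank of $\textnormal{CDiv}_T(X_\mathcal{P})$ as $n+l$ via a potential-function/cycle-space argument and quotient by $\mathbb{Z}^{\mathcal{P}}$ using the Cartier analogue of the divisor sequence (\cite[Thm.~4.2.9]{CLS11}, which applies since the normal fan of the full-dimensional polytope $\mathcal{O}(\Pop)$ contains maximal cones). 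Your route yields a clean structural description of \emph{all} torus-invariant Cartier divisors (pairs of potentials on $\Po\cup\{\hat 0\}$ and $\Po\cup\{\hat 1\}$ differing by a constant on each component of $\Po$), at the cost of needing the freeness of $\textnormal{Pic}(X_\mathcal{P})$ from Theorem \ref{Cl} to convert a rank count into an isomorphism type; the paper's route is more explicit and in particular exhibits the generators $[D_C]$ of $\textnormal{Pic}(X_\mathcal{P})$ inside $\textnormal{Cl}(X_\mathcal{P})$.
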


The Picard group $\textnormal{Pic}(X_\mathcal{P})$ is isomorphic to the subgroup of $\textnormal{Cl}(X_\mathcal{P})$ which consists of divisor classes of locally principal divisors. Hence, we want to understand when a divisor $D_{p<q}$ is locally principal.

For an ideal $I\in\mathcal{I}(\mathcal{P})$ let $\mathcal{C}_I(\hat{\mathcal{P}})=\{p<q\in\mathcal{C}(\hat{\mathcal{P}}):|\{p,q\}\cap(I\cup\{\hat 0\})|\neq 1\}$. Note that $\mathcal{C}_I(\hat{\mathcal{P}})$ corresponds to the set of all facets of $\mathcal{O}(\Pop)$ which contain the vertex $\mathbf{a}_I$. Recall the description of the facet normals given in equation \eqref{FacetNormals}. With this notation we have the following criterion, which is a consequence of Thm. 4.2.8. in \cite{CLS11}.

\begin{lem}\label{LocPrinc}
Let $D=\sum\limits_{p<q\in\mathcal{C}(\hat{\mathcal{P}})}{\alpha_{p<q}D_{p<q}}$. Then $D$ is locally principal if and only if for every $I\in\mathcal{I}(\mathcal{P})$ there is $\mathbf{m}\in\mathbb{Z}^{\mathcal{P}}$ such that 
\begin{equation*}
\langle \mathbf{m},\mathbf{u}_{p<q}\rangle=\alpha_{p<q}\textnormal{ for all }p<q\in\mathcal{C}_I(\hat{\mathcal{P}}).
\end{equation*}
\end{lem}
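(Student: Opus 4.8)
The plan is to read off the fan of $X_\mathcal{P}$ from the order polytope and then quote the toric Cartier criterion. Recall that $X_\mathcal{P}$ is the projective toric variety associated to the full-dimensional lattice polytope $\mathcal{O}(\Pop)\subseteq\mathbb{R}^{\mathcal{P}}$, so its fan is the (inner) normal fan $\Sigma$ of $\mathcal{O}(\Pop)$ and its character lattice is $M=\mathbb{Z}^{\mathcal{P}}$. The rays of $\Sigma$ are spanned by the facet normals of $\mathcal{O}(\Pop)$, which by \eqref{FacetNormals} are precisely the $\mathbf{u}_{p<q}$ for $p<q\in\mathcal{C}(\hat{\mathcal{P}})$; thus $\textnormal{Div}_T(X_\mathcal{P})=\bigoplus_{p<q}\mathbb{Z}\,D_{p<q}$, as already observed. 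The maximal cones of $\Sigma$ are indexed by the vertices of $\mathcal{O}(\Pop)$, i.e.\ by the ideals $I\in\mathcal{I}(\mathcal{P})$, and the cone $\sigma_I$ attached to the vertex $\mathbf{a}_I$ is generated by those $\mathbf{u}_{p<q}$ whose facet contains $\mathbf{a}_I$. So the first task is to pin down which facets those are.

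The key step is therefore to verify that the set of facets of $\mathcal{O}(\Pop)$ through $\mathbf{a}_I$ is exactly $\mathcal{C}_I(\hat{\mathcal{P}})$ — the identification noted just above without proof — so that $\sigma_I(1)=\{\mathbb{R}_{\ge0}\mathbf{u}_{p<q}:p<q\in\mathcal{C}_I(\hat{\mathcal{P}})\}$. Using Stanley's description of the order polytope \cite{St86}, the facet with normal $\mathbf{u}_{p<q}$ lies in the hyperplane $\langle x,\mathbf{u}_{p<q}\rangle=0$ if $p\neq\hat 0$ and in $\langle x,\mathbf{u}_{p<q}\rangle=-1$ if $p=\hat 0$. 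Evaluating $\langle\mathbf{a}_I,\mathbf{u}_{p<q}\rangle$ in each of the three cases of \eqref{FacetNormals} — for instance $\langle\mathbf{a}_I,\mathbf{e}_p-\mathbf{e}_q\rangle=0$ if and only if $p\in I\Leftrightarrow q\in I$ for $p,q\in\mathcal{P}$, and similarly for the two boundary cases — one checks at once that $\mathbf{a}_I$ lies on the facet of $\mathbf{u}_{p<q}$ exactly when $p$ and $q$ lie on the same side of $I\cup\{\hat 0\}$, that is, when $|\{p,q\}\cap(I\cup\{\hat 0\})|\neq 1$. This is the defining condition of $\mathcal{C}_I(\hat{\mathcal{P}})$, so the claimed bijection between the facets through $\mathbf{a}_I$ and $\mathcal{C}_I(\hat{\mathcal{P}})$ holds.

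With this in place the lemma follows from \cite[Thm.~4.2.8]{CLS11}, which characterizes locally principal (equivalently Cartier) torus-invariant divisors: $D=\sum_\rho\alpha_\rho D_\rho$ is locally principal if and only if for every maximal cone $\sigma$ there is $m_\sigma\in M$ with $\langle m_\sigma,u_\rho\rangle=-\alpha_\rho$ for all $\rho\in\sigma(1)$. Specializing to $D=\sum_{p<q}\alpha_{p<q}D_{p<q}$ and $\sigma=\sigma_I$, this says that for each $I\in\mathcal{I}(\mathcal{P})$ there is $\mathbf{m}\in\mathbb{Z}^{\mathcal{P}}$ with $\langle\mathbf{m},\mathbf{u}_{p<q}\rangle=-\alpha_{p<q}$ for all $p<q\in\mathcal{C}_I(\hat{\mathcal{P}})$, and passing from $\mathbf{m}$ to $-\mathbf{m}$ gives the asserted form. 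The argument is essentially bookkeeping, and I do not expect a real obstacle: the only points to handle with care are that the fan of $X_\mathcal{P}$ is the normal fan of $\mathcal{O}(\Pop)$ (which is exactly the content of the proposition of Section~2 together with full-dimensionality) and the sign and lattice conventions of \cite{CLS11}, where the local equation of $D$ on $U_\sigma$ is $\chi^{-m_\sigma}$, which is the source of the minus sign and of the harmless substitution $\mathbf{m}\mapsto-\mathbf{m}$.
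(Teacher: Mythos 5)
Your proof is correct and follows essentially the same route as the paper, which simply cites \cite[Thm.~4.2.8]{CLS11} together with the observation that $\mathcal{C}_I(\hat{\mathcal{P}})$ indexes the facets of $\mathcal{O}(\Pop)$ through the vertex $\mathbf{a}_I$. You merely spell out the facet--vertex incidence check and the sign convention ($\mathbf{m}\mapsto-\mathbf{m}$), both of which are verified correctly.
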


We will now use this to prove the main theorem.

\begin{proof}[Proof of Theorem \ref{Pic}]
We want to describe the subgroup of $\textnormal{Cl}(X_\mathcal{P})$ which consists of divisor classes of locally principal divisors. Let $[D]$ be a divisor class such that $D$ is locally principal. By Corollary \ref{gens} we may assume that $D$ is of the form
\begin{equation*}
D=\sum\limits_{p<q\in\mathcal{C}(\hat{\mathcal{P}})\backslash T}{\alpha_{p<q}D_{p<q}}.
\end{equation*} 
We will first apply Lemma \ref{LocPrinc} for the ideals $\mathcal{P}$ and $\emptyset$ to get some conditions on the coefficients $\alpha_{p<q}$. Then we will show that these conditions are in fact sufficient.\\
Let $I=\mathcal{P}\in\mathcal{I}(\mathcal{P})$. Then $\mathcal{C}_I(\hat{\mathcal{P}})=\{p<q\in\mathcal{C}(\hat{\mathcal{P}}): q\neq\hat 1\}$. We claim that for all $p<q\in\mathcal{C}_I(\hat{\mathcal{P}})$ we must have $\alpha_{p<q}=0$. First note that for any chain $\hat 0< p_1<\cdots< p_k< q$ in the Hasse diagram of $\hat{\mathcal{P}}$ we have by the above lemma
\begin{equation*}
\alpha_{\hat 0<p_1}+\sum\limits_{1\le i\le k-1}{\alpha_{p_i<p_{i+1}}}+\alpha_{p_k<q}=-m_q=0,
\end{equation*}
where the last equality follows from choosing a chain in $T$. Now consider a chain of the form $\hat 0< p_1'<\cdots< p_l'< p< q$ such that $\hat 0< p_1'<\cdots< p_l'< p$ lies in $T$. This yields $\alpha_{p<q}=0$.\\
So far we have shown that $D$ must be of the form $D=\sum_{p\in M}{\alpha_{p<\hat 1}D_{p<\hat 1}}$, where $M$ denotes the set of maximal elements of $\mathcal{P}$. Now choose $I=\emptyset\in\mathcal{I}(\mathcal{P})$. We have $\mathcal{C}_I(\hat{\mathcal{P}})=\{p<q\in\mathcal{C}(\hat{\mathcal{P}}):p\neq\hat 0\}$. We claim that if $p_1,p_2\in M$ are in the same connected component of $\mathcal{P}$ then we must have $\alpha_{p_1<\hat 1}=\alpha_{p_2<\hat 1}$. We call $p_1,p_2\in M$ adjacent if there exists a $q\in\mathcal{P}$ such that $q<p_1$ and $q<p_2$. Since $\mathcal{P}$ is finite it suffices to prove the claim for adjacent $p_1,p_2$. Let $q\in\mathcal{P}$ such that $q<p_1,p_2$. As above we get $0=m_q-m_{p_1}=m_q-m_{p_2}$, which in particular implies $m_{p_1}=m_{p_2}$. But $m_{p_i}=\alpha_{p_i<\hat 1}$ by Lemma \ref{LocPrinc}, which proves the claim.\\
Let $\mathfrak{C}(\mathcal{P})$ be the set of connected components of $\mathcal{P}$. We have shown that $D$ must be of the form
\begin{equation*}
D=\sum\limits_{C\in\mathfrak{C}(\mathcal{P})}{\alpha_C D_C}\textnormal{ where }D_C=\sum\limits_{p\in M\cap C}{D_{p<\hat 1}}.
\end{equation*}
The only thing left to show is that every such $D$ is locally principal by again using Lemma \ref{LocPrinc}. Let $I\in\mathcal{I}(\mathcal{P})$. Define $\mathbf{m}=(m_p)_{p\in\mathcal{P}}$ as follows. For all $p\in I$ set $m_{p}=0$. For all $p\in\mathcal{P}\backslash I$, let $C$ be the connected component that $p$ lies in and set $m_{p}=\alpha_C$. It is easy to check that $\mathbf{m}$ has all the desired properties.
 
\end{proof}

\textbf{Acknowledgements.} This work generalizes results from the author's master thesis supervised by Gunnar Fl\o ystad, whom the author would like to thank for fruitful discussions and constant support. The author would also like to thank Raman Sanyal for helpful comments on a previous version of this paper.

\linespread{1.0}
\setlength{\parskip}{0cm}

\small


\end{document}